\theoremstyle{plain}
\newtheorem{theorem}{Theorem}
\newtheorem{corollary}[theorem]{Corollary}
\newtheorem{lemma}[theorem]{Lemma}
\theoremstyle{definition}
\newtheorem{definition}[theorem]{Definition}
\theoremstyle{remark}
\newcommand{\N}{\mathbb{N}}
\newcommand{\Z}{\mathbb{Z}}
\newcommand{\ds}{\displaystyle}
\newcommand{\Lfloor}{\left\lfloor}
\newcommand{\Rfloor}{\right\rfloor}
\date{}
\begin{document}

\title{Congruences of Power Sums}
\markright{Symmetry of the Power Sum Polynomials}

\author{Nicholas J. Newsome, Maria S. Nogin,\\ and Adnan H. Sabuwala\\Department of Mathematics\\California State University, Fresno\\Fresno, CA 93740 \\ USA\\\href{mailto:mrgoof@mail.fresnostate.edu}{mrgoof@mail.fresnostate.edu}\\\href{mailto:mnogin@csufresno.edu}{mnogin@csufresno.edu}\\\href{mailto:asabuwala@csufresno.edu}{asabuwala@csufresno.edu}}

\maketitle

\begin{abstract}
The following congruence for power sums, $S_n(p)$, is well known and has many applications: 
$$1^n+2^n +\dots +p^n \equiv\begin{cases} 
-1 \pmod{p}, & \text{ if } \ p-1 \divides n; \\ 
0 \pmod{p}, & \text{ if } \ p-1 \notdivides n, 
\end{cases}$$
where $n\in\N$ and $p$ is prime. We extend this congruence, in particular, to the case when $p$ is any power of a prime. We also show that the sequence $(S_n(m) \bmod{k})_{m \geq 1}$ is periodic and determine its period. 
\end{abstract}

\section{Introduction}

Sums of powers of integers defined below have captivated mathematicians for many centuries \cite{Beery}.

\begin{definition}
For $n,m\in\N$, let 
$$S_n(m) = \sum_{i=1}^m i^n.$$
\end{definition}

With their pebble experiments, the Pythagoreans were the first to discover a formula for the sum of the first powers. Formulas for the sums of second and third powers were proved geometrically by Aryabhatta and Archimedes, and Harriot later provided a generalizable form for these formulas. Faulhaber gave formulas for power sums up to the $17^{\text{th}}$ power, and Fermat, Pascal, and Bernoulli provided succinct formulas for them. Since then, different representations and number-theoretic properties of these power sums have been an object of study \cite{Knuth, MS}. Bernoulli numbers have been used to represent the coefficients of polynomial formulas for these power sums such as Faulhaber's formula \cite[p.\ 107]{Conway}. In a recent paper, Newsome et al.\ \cite{NMA} have demonstrated symmetry properties of the power sum polynomials and their roots via a novel Bernoulli number identity.

One of the most well-known results concerning the number-theoretic properties of power sums is the following congruence relation:

\begin{theorem}
\label{known-congruency-theorem}
If $n\in\N$ and $p$ is prime, then 
$$S_n(p)\equiv\begin{cases} 
-1 \pmod{p}, & \text{ if } \ p-1 \divides n; \\ 
0 \pmod{p}, & \text{ if } \ p-1 \notdivides n.
\end{cases}$$
\end{theorem}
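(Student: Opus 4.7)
The plan is to split into the two cases and reduce each to a statement about the multiplicative group $(\Z/p\Z)^\times$. Since $p^n \equiv 0 \pmod p$, in both cases it suffices to evaluate $\sum_{i=1}^{p-1} i^n \pmod p$.

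In the case $p-1 \divides n$, I would apply Fermat's little theorem directly: for each $i$ with $1 \leq i \leq p-1$, $i^{p-1} \equiv 1 \pmod p$, so $i^n = (i^{p-1})^{n/(p-1)} \equiv 1 \pmod p$. Summing these $p-1$ ones gives $S_n(p) \equiv p-1 \equiv -1 \pmod p$.

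In the case $p-1 \notdivides n$, I would invoke the existence of a primitive root $g$ modulo $p$. Since the residues $g, g^2, \ldots, g^{p-1}$ are a permutation of $1, 2, \ldots, p-1$ modulo $p$, the sum becomes a geometric series:
\[
\sum_{i=1}^{p-1} i^n \equiv \sum_{k=1}^{p-1} g^{kn} \equiv g^n \cdot \frac{g^{n(p-1)} - 1}{g^n - 1} \pmod p.
\]
By Fermat's little theorem the numerator $g^{n(p-1)} - 1$ vanishes modulo $p$, while the assumption $p-1 \notdivides n$ guarantees $g^n \not\equiv 1 \pmod p$, so $g^n - 1$ is invertible modulo $p$. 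Hence the sum is $0 \pmod p$.

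The only delicate point is justifying the division by $g^n - 1$ in the geometric series formula, which is why the dichotomy on whether $p-1 \divides n$ arises so naturally; once the primitive-root rearrangement is in hand, there is no further obstacle. I would present the two cases in sequence, noting that the contribution $p^n$ is absorbed harmlessly in both.
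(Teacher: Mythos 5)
Your proof is correct. The paper does not prove Theorem \ref{known-congruency-theorem} itself (it defers to the literature), but the two ingredients you use --- Fermat's little theorem for the case $p-1\divides n$ and the primitive-root rearrangement of $\{1,\dots,p-1\}$ for the case $p-1\notdivides n$ --- are exactly the approach the paper attributes to Rado and reuses in its own proof of Theorem \ref{generator-block-theorem}, so your argument is essentially the intended one.
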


The case $p-1 \divides n$ is an easy consequence of Fermat's little theorem. There are several different proofs for the case $p-1 \notdivides n$ in the literature. Some of the notable ones are by Rado \cite{HW, M2, R} using the theory of primitive roots, Zagier \cite{M} using Lagrange's theorem, and MacMillan and Sondow \cite{MS} using Pascal's identity. Also, a proof of both cases by Carlitz \cite{C} uses Bernoulli numbers.

This congruence is used to prove the von Staudt-Clausen theorem \cite{HW, R} and its generalization \cite{C}, prove the Carlitz-von Staudt theorem \cite {M}, and study the Erd\H{o}s-Moser equation $S_n(m-1)=m^n$ \cite{M, M2, Mo}. 

Our main goal in this paper is to generalize the well-known congruence result above and to present periodicity properties of the sequence $(S_n(m)\bmod{k})_{m\geq 1}$. In Section \ref{generalization-section}, we extend Theorem \ref{known-congruency-theorem} to the case when $p$ is a power of a prime. In Section \ref{periodicity-section}, we prove that $(S_n(m) \bmod{k})_{m\geq 1}$ is periodic and determine its period for different values of $k$ and $n$. 

\section{Generalization of Theorem \ref{known-congruency-theorem}}
\label{generalization-section}

\begin{theorem}
\label{generalized-congruency-theorem}
\begin{enumerate}
\item[(1)] For $n\in\N$ and $p=2^a$ with $a\ge 2$, 
$$S_n(p)\equiv\begin{cases} 
\varphi(p) \pmod{p}, & \text{ if } \ n=1 \text{ or } 2 \divides n; \\ 
0 \pmod{p}, & \text{ if } \ n>1 \text{ and } 2 \notdivides n,
\end{cases}$$
where $\varphi$ is Euler's totient function.

\item[(2)] If $n\in\N$ and $p=q^a$ where $q$ is an odd prime and $a\ge 1$, then 
$$S_n(p)\equiv\begin{cases} 
\varphi(p) \pmod{p}, & \text{ if } \ q-1 \divides n; \\ 
0 \pmod{p}, & \text{ if } \ q-1 \notdivides n.
\end{cases}$$
\end{enumerate}
\end{theorem}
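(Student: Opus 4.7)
The plan is to partition $\{1, \ldots, q^a\}$ into $q$ consecutive blocks of length $q^{a-1}$ in order to express $S_n(q^a)$ in terms of $S_n(q^{a-1})$ and $S_{n-1}(q^{a-1})$. Writing each summand as $(jq^{a-1}+k)^n$ with $j\in\{0,1,\dots,q-1\}$ and $k\in\{1,\dots,q^{a-1}\}$ and expanding by the binomial theorem, every term $(jq^{a-1})^\ell$ with $\ell \ge 2$ is divisible by $q^{2(a-1)}$ and hence vanishes modulo $q^a$ as soon as $a \ge 2$. Collecting the surviving $\ell=0,1$ terms and using $\sum_{j=0}^{q-1} j = q(q-1)/2$ gives the master recursion
$$S_n(q^a) \equiv q\, S_n(q^{a-1}) + \frac{n\, q^a(q-1)}{2}\, S_{n-1}(q^{a-1}) \pmod{q^a}, \qquad a \ge 2.$$

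For part (2), $q$ is odd, so $(q-1)/2 \in \Z$, the coefficient $n q^a (q-1)/2$ is already a multiple of $q^a$, and the second term drops out. This leaves the clean recursion $S_n(q^a) \equiv q\, S_n(q^{a-1}) \pmod{q^a}$, which I would iterate by induction on $a$, with base case $a=1$ supplied by Theorem \ref{known-congruency-theorem}. Since $\varphi(q^a) = q^{a-1}(q-1) \equiv -q^{a-1} \pmod{q^a}$, one checks by direct multiplication that the hypothesis $S_n(q^{a-1}) \equiv -q^{a-2} \pmod{q^{a-1}}$ (in the $(q-1) \divides n$ case) lifts to $S_n(q^a) \equiv -q^{a-1} \equiv \varphi(q^a) \pmod{q^a}$, and similarly $0$ lifts to $0$ in the other case.

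For part (1), with $p=2^a$ the factor $(q-1)/2 = 1/2$ is not an integer, so the second term $n \cdot 2^{a-1}\, S_{n-1}(2^{a-1})$ survives. I would induct on $a \ge 2$, verifying the base case $a=2$ directly ($S_1(4) = 10 \equiv 2 \pmod 4$, and $S_n(4) \equiv 1 + (-1)^n \pmod 4$ for $n \ge 2$). In the inductive step (so $a \ge 3$): if $n$ is odd with $n \ge 3$, the inductive hypothesis gives $S_n(2^{a-1}) \equiv 0 \pmod{2^{a-1}}$, annihilating the first term, while $S_{n-1}(2^{a-1}) \equiv 2^{a-2} \pmod{2^{a-1}}$ (since $n-1$ is even) makes the second term $\equiv n \cdot 2^{a-1} \cdot 2^{a-2} \equiv 0 \pmod{2^a}$. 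If $n$ is even, the first term contributes $2 \cdot 2^{a-2} = 2^{a-1} = \varphi(2^a)$, and the second term vanishes once one notes that $S_{n-1}(2^{a-1})$ is even (via the inductive hypothesis when $n-1 \ge 3$, or via the closed form $S_1(2^{a-1}) = 2^{a-2}(2^{a-1}+1)$ when $n=2$, using $a \ge 3$). Finally, $n=1$ is immediate from $S_1(2^a) = 2^{a-1}(2^a+1) \equiv 2^{a-1} \pmod{2^a}$.

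The principal obstacle is the case $p=2^a$: the surviving second term in the recursion prevents a one-line induction and forces the parity bookkeeping above, and it is also why the base case $a=2$ has to be handled by hand rather than inherited from Theorem \ref{known-congruency-theorem}.
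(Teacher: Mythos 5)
Your proposal is correct and follows essentially the same route as the paper's proof: partition $\{1,\dots,q^a\}$ into $q$ blocks of length $q^{a-1}$, expand by the binomial theorem, discard the terms divisible by $q^{2(a-1)}$, and induct on $a$ using the resulting recursion $S_n(q^a)\equiv qS_n(q^{a-1})+\frac{nq^a(q-1)}{2}S_{n-1}(q^{a-1})\pmod{q^a}$. The only cosmetic differences are that the paper steps from $a$ to $a+1$ rather than from $a-1$ to $a$, and it justifies the evenness of $S_{n-1}(2^a)$ by counting odd summands rather than by the inductive hypothesis.
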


\begin{proof} 
\begin{enumerate}
\item[(1)]
The proof is by induction on $a$. For $a=2$, 
\begin{align*}
S_n(4) & \equiv 1^n + 2^n + 3^n + 4^n \equiv 1^n + 2^n + (-1)^n \\
& \equiv \begin{cases}
2 \pmod{4}, & \text{ if $n=1$ or $2\divides n$}; \\
0 \pmod{4}, & \text{ if $n>1$ and $2\notdivides n$}.
\end{cases}
\end{align*}
Suppose the statement holds for some $a\ge 2$. Then, for $n=1$ we have 
$$S_1(2^{a+1}) \equiv \frac{2^{a+1}(2^{a+1}+1)}{2} \equiv 2^a \equiv \varphi(2^{a+1}) \pmod{2^{a+1}},$$ 
and for $n\ge 2$, 
\begin{align*}
S_n(2^{a+1}) & \equiv 1^n+\dots +(2^a)^n + (2^a+1)^n +\dots +(2^{a+1})^n \\
& \equiv S_n(2^a) + \sum_{t=1}^{2^a} (2^a+t)^n \\ 
& \equiv S_n(2^a) + \sum_{t=1}^{2^a} \left( t^n + n2^at^{n-1} \right)\\ 
& \text{(all other terms are divisible by $(2^{a})^2$, thus divisible by $2^{a+1}$)} \\ 
& \equiv 2S_n(2^a) + n2^a \underbrace{S_{n-1}(2^a)}_{\text{even}} \\
& (\text{since $a \ge 2$, $S_{n-1}(2^a)$ has an even number of odd terms})\\ 
& \equiv 2S_n(2^a) \pmod{2^{a+1}}. 
\end{align*}
If $2 \divides n$, then 
$$S_n(2^a)\equiv \varphi(2^a) \pmod{2^a},$$ 
so 
$$S_n(2^{a+1})\equiv2S_n(2^a)\equiv 2\varphi(2^{a}) \equiv \varphi(2^{a+1}) \pmod{2^{a+1}}.$$
If $2 \notdivides n$, then 
$$S_n(2^a)\equiv 0 \pmod{2^a},$$ 
so 
$$S_n(2^{a+1})\equiv2S_n(2^a)\equiv 0 \pmod{2^{a+1}}.$$ 

\item[(2)]
The proof is by induction on $a$. The case $a=1$ is Theorem \ref{known-congruency-theorem}. 

Suppose the statement holds for some $a\ge 1$. Then, 
for $n=1$ we have 
$$S_1(q^{a+1}) = \frac{q^{a+1}(q^{a+1}+1)}{2} \equiv 0 \pmod{q^{a+1}},$$ 
and for $n\ge 2$, 
\begin{align*}
S_n(q^{a+1}) & \equiv \left(1^n+\dots +(q^a)^n\right) + \dots + \left(((q-1)q^a+1)^n +\dots +(q^{a+1})^n\right) \\
& \equiv \sum_{i=0}^{q-1} \sum_{t=1}^{q^a} (iq^a+t)^n \\ 
& \equiv \sum_{i=0}^{q-1} \sum_{t=1}^{q^a} \left( t^n + niq^at^{n-1} \right) \\
& \text{(all other terms are divisible by $(q^{a})^2$, thus divisible by $q^{a+1}$)} \\ 
& \equiv \sum_{i=0}^{q-1} \left( S_n(q^a) + niq^aS_{n-1}(q^a) \right)\\ 
& \equiv qS_n(q^a) + n\frac{(q-1)q}{2}q^aS_{n-1}(q^a) \\ 
& \equiv qS_n(q^a) \pmod{q^{a+1}}. 
\end{align*}
If $q-1 \divides n$, then 
$$S_n(q^a)\equiv \varphi(q^a)  \pmod{q^a},$$ 
so 
$$S_n(q^{a+1})\equiv qS_n(q^a)\equiv q\varphi(q^a) \equiv \varphi(q^{a+1}) \pmod{q^{a+1}}.$$
If $q-1 \notdivides n$, then 
$$S_n(q^a)\equiv 0 \pmod{q^a},$$ 
so 
$$S_n(q^{a+1})\equiv qS_n(q^a)\equiv 0 \pmod{q^{a+1}}.$$ 
\end{enumerate}
\end{proof}

\begin{corollary}
\label{generalized-congruency-corollary}
For any $a,n\in\N$ and prime $q$,  
$$S_n(q^a)\equiv 0 \pmod{q^{a-1}}.$$
\end{corollary}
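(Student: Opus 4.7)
The plan is to deduce the corollary immediately from Theorem \ref{generalized-congruency-theorem} by observing that $q^{a-1}$ divides $\varphi(q^a)$, so every residue that can arise on the right-hand side of the theorem is already divisible by $q^{a-1}$.

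First I would dispose of the trivial case $a=1$: here $q^{a-1}=1$, so the congruence $S_n(q)\equiv 0\pmod{1}$ holds vacuously. I would also separately note the case $q=2$, $a=1$, which is covered by the same triviality and therefore need not appeal to Theorem \ref{generalized-congruency-theorem}.

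Next I would treat the odd-prime case: assume $q$ is an odd prime and $a\ge 1$. By Theorem \ref{generalized-congruency-theorem}(2), $S_n(q^a)$ is congruent modulo $q^a$ to either $\varphi(q^a)$ or $0$, depending on whether $q-1\divides n$. Since $\varphi(q^a)=q^{a-1}(q-1)$, both possibilities are divisible by $q^{a-1}$, and the congruence modulo $q^a$ trivially implies the weaker congruence modulo $q^{a-1}$, yielding $S_n(q^a)\equiv 0\pmod{q^{a-1}}$.

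Finally I would handle the remaining case $q=2$, $a\ge 2$. Theorem \ref{generalized-congruency-theorem}(1) gives $S_n(2^a)\equiv \varphi(2^a)\pmod{2^a}$ or $\equiv 0\pmod{2^a}$, and since $\varphi(2^a)=2^{a-1}$, both residues are divisible by $2^{a-1}$; again this forces $S_n(2^a)\equiv 0\pmod{2^{a-1}}$. There is no real obstacle here: the corollary is essentially a bookkeeping observation that the two possible residues in Theorem \ref{generalized-congruency-theorem} always share the common divisor $q^{a-1}$, and the only mild care required is to avoid the degenerate case $q=2$, $a=1$ (where Theorem \ref{generalized-congruency-theorem}(1) does not apply) by handling $a=1$ separately at the start.
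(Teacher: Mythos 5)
Your proof is correct and is exactly the argument the paper intends (the paper leaves the corollary without an explicit proof, as an immediate consequence of Theorem \ref{generalized-congruency-theorem}): both possible residues, $0$ and $\varphi(q^a)=q^{a-1}(q-1)$ (or $2^{a-1}$), are divisible by $q^{a-1}$, and the degenerate case $a=1$, $q=2$ is trivial since $q^{a-1}=1$.
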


The next few results will be used to extend Theorem \ref{generalized-congruency-theorem} (2).

\begin{lemma}
\label{div-by-qpowers-lemma}
If $n,i,j,k\in\N$ and $q$ is an odd prime such that $q^i \divides n$, then $q^{i+j} \divides \ds\binom{n}{k}(q^j)^k$. Moreover, for $k\ge 2$, 
$q^{i+j+1} \divides \ds\binom{n}{k}(q^j)^k$.
\end{lemma}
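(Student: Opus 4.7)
The plan is to reduce the two claims to an elementary $q$-adic valuation estimate via the absorption identity $\binom{n}{k} = \frac{n}{k}\binom{n-1}{k-1}$. Rewriting
$$\binom{n}{k}(q^j)^k = \frac{n\binom{n-1}{k-1}q^{jk}}{k}$$
and using $q^i \divides n$, the numerator is divisible by $q^{i+jk}$; since the left-hand side is an integer, writing $v_q$ for the $q$-adic valuation, one concludes $v_q\!\left(\binom{n}{k}(q^j)^k\right) \ge i + jk - v_q(k)$.

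First I would dispatch $k=1$ directly: $\binom{n}{1}(q^j)^1 = nq^j$ is visibly divisible by $q^{i+j}$, and the moreover clause imposes no condition here. For $k \ge 2$, the claim $q^{i+j} \divides \binom{n}{k}(q^j)^k$ reduces via the estimate above to $j(k-1) \ge v_q(k)$, and the moreover claim reduces to $j(k-1) \ge v_q(k)+1$. Since $j \ge 1$, it suffices to verify the corresponding inequalities with $j$ replaced by $1$, namely $k-1 \ge v_q(k)$ and $k-1 \ge v_q(k)+1$.

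For the first inequality, the bound $k \ge q^{v_q(k)} \ge 3^{v_q(k)}$ (where the oddness of $q$ enters to guarantee $q \ge 3$) combined with the elementary estimate $3^m \ge m+1$ for all $m \ge 0$ does the job. For the second, I would split into cases: if $v_q(k)=0$ the bound is just $k \ge 2$, which is the hypothesis, and if $v_q(k) \ge 1$ then $k \ge 3^{v_q(k)} \ge v_q(k)+2$, a one-line induction on $v_q(k)$.

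The main obstacle is really just careful bookkeeping of the edge cases: $k=1$ has to be peeled off because $k-1=0$ invalidates the chain of reductions, and the moreover bound genuinely requires $q$ odd, since the analogous estimate $2^m \ge m+2$ fails at $m=0$.
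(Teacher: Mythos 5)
Your proof is correct, but it takes a different route from the paper's. The paper works from $\binom{n}{k}=\frac{n(n-1)\cdots(n-k+1)}{k!}$, pulls the factor $q^i$ out of $n$, and controls the denominator with Legendre's formula, reducing the claim to the inequality $i+j < i-\bigl(\lfloor k/q\rfloor+\lfloor k/q^2\rfloor+\cdots\bigr)+jk$, which it verifies by comparing the floor sum to the geometric series $\sum_{t\ge1}k/q^t=k/(q-1)\le k/2$; proving the strict inequality yields both divisibility claims at once. You instead use the absorption identity $\binom{n}{k}=\frac{n}{k}\binom{n-1}{k-1}$, which replaces the valuation of all of $k!$ by the valuation of $k$ alone, so your target inequalities $j(k-1)\ge v_q(k)$ and $j(k-1)\ge v_q(k)+1$ are weaker statements needing only the elementary bounds $3^m\ge m+1$ and (for $m\ge1$) $3^m\ge m+2$. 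Your version is arguably cleaner --- it avoids Legendre's formula and the slightly delicate strictness of the floor-versus-series comparison --- while giving up a little sharpness in the valuation bound, which is irrelevant here; both arguments use $q\ge3$ at the analogous step. One small inaccuracy in your closing remark: for $q=2$ the ``moreover'' estimate does not fail at $m=0$ (that case is covered by the hypothesis $k\ge2$, not by $q^m\ge m+2$); the genuine failure is at $m=1$, e.g., $n=k=2$, $i=j=1$ gives $\binom{2}{2}\cdot2^2=4$, which is not divisible by $2^{i+j+1}=8$. This does not affect the validity of your proof.
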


\begin{proof}
If $k=1$, then $\ds \binom{n}{k}(q^j)^k = nq^j$ is divisible by $q^{i+j}$. 

If $k\ge 2$, note that since $\ds \binom{n}{k} = \frac{n(n-1)\cdots(n-k+1)}{k!}$ and the highest power of $q$ that divides $k!$ is $q^\alpha$ where $\ds \alpha=\Lfloor\frac{k}{q}\Rfloor+\Lfloor\frac{k}{q^2}\Rfloor+\cdots$, it is sufficient to show that 
$$i+j < i - \left( \Lfloor\frac{k}{q}\Rfloor+\Lfloor\frac{k}{q^2}\Rfloor+\cdots\right) +jk.$$

Indeed, 
\begin{align*}
i+j & \le i+j+\frac{k}{2}-1 \\
& = i+j-\frac{k}{2}+k-1 \\
& \le i+j- \frac{k}{q-1}+j(k-1) \\
& = i - \sum_{t=1}^{\infty}\frac{k}{q^t} +jk \\
& < i - \left( \Lfloor\frac{k}{q}\Rfloor+\Lfloor\frac{k}{q^2}\Rfloor+\cdots\right) +jk.
\end{align*}
\end{proof}

\begin{corollary}
\label{power-congruency-corollary}
If $i,j,n,t\in \N$, $q$ is an odd prime, and $q^i \divides n$, then 
$$(t+q^j)^n \equiv t^n\pmod{q^{i+j}}.$$
\end{corollary}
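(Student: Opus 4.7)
The plan is to derive Corollary \ref{power-congruency-corollary} as an immediate consequence of Lemma \ref{div-by-qpowers-lemma} via the binomial theorem. Expanding gives
$$(t+q^j)^n = t^n + \sum_{k=1}^{n} \binom{n}{k} t^{n-k} (q^j)^k,$$
so the congruence $(t+q^j)^n \equiv t^n \pmod{q^{i+j}}$ is equivalent to showing that every term in the sum with $k \geq 1$ is divisible by $q^{i+j}$.

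For each $k \geq 1$, Lemma \ref{div-by-qpowers-lemma} asserts precisely that $q^{i+j} \divides \binom{n}{k}(q^j)^k$, under the hypothesis $q^i \divides n$. Multiplying by the integer $t^{n-k}$ preserves divisibility, so $q^{i+j}$ divides $\binom{n}{k} t^{n-k} (q^j)^k$ for every $k \geq 1$. Summing over $k$ from $1$ to $n$ yields a multiple of $q^{i+j}$, which gives the desired congruence.

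There is essentially no obstacle here: the lemma was stated in exactly the form needed to make the binomial expansion collapse modulo $q^{i+j}$, and the stronger statement in the lemma for $k \geq 2$ is not even required for this corollary (it will presumably be used in the next section to push beyond the leading correction term). The proof is thus a one-line application of the binomial theorem together with Lemma \ref{div-by-qpowers-lemma}.
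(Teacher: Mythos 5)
Your proof is correct and matches the paper's own argument exactly: expand by the binomial theorem and apply Lemma \ref{div-by-qpowers-lemma} to each term with $k\ge 1$. Your side remark that only the weaker ($k\ge 1$) part of the lemma is needed here is also accurate.
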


\begin{proof}
We have
\begin{align*}
(t+q^j)^n &\equiv t^n+\sum_{k=1}^{n} \binom{n}{k} (q^j)^k t^{n-k} \\
&\equiv t^n\pmod{q^{i+j}}. \qquad\text{(by Lemma \ref{div-by-qpowers-lemma})}
\end{align*}
\end{proof}

If $q$ is an odd prime and $g$ is invertible modulo $q$, then multiplication by $g$ permutes elements of $\Z_q^*$, that is, 
\begin{equation}\label{mult-by-g}
\{g\cdot1\bmod{q},\dots, g(q-1)\bmod{q}\}=\{1,\dots,q-1\}
\end{equation} 
as sets. 

The following theorem extends Theorem \ref{generalized-congruency-theorem} (2).

\begin{theorem}
\label{generator-block-theorem}
If $i\in\Z$, $i\ge 0$, $j,n\in\N$, $q$ is an odd prime, $q-1\notdivides n$, and $q^i \divides n$, then $$S_n(q^j)\equiv0\pmod{q^{i+j}}.$$ 
\end{theorem}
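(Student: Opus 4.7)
The plan is to generalize the primitive-root idea behind equation~(\ref{mult-by-g}), lifting the permutation from modulus $q$ to modulus $q^j$. Let $g$ be a primitive root modulo $q$; since $\gcd(g,q)=1$, $g$ is also invertible modulo $q^j$, so the map $k\mapsto gk \bmod q^j$ is a bijection on $\{0,1,\dots,q^j-1\}$. The strategy is to evaluate $\sum_{k=1}^{q^j}(gk)^n$ in two ways modulo $q^{i+j}$: on the one hand it equals $g^n S_n(q^j)$ by factoring; on the other hand, after replacing each $(gk)^n$ by the $n$th power of its residue modulo $q^j$ and re-indexing via the bijection, it equals $\sum_{r=0}^{q^j-1} r^n = S_n(q^j)-q^{jn}$.

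The heart of the argument is the congruence
$$(gk)^n \equiv (gk \bmod q^j)^n \pmod{q^{i+j}},$$
which I would derive from Lemma~\ref{div-by-qpowers-lemma}. Writing $gk = r+mq^j$ with $0 \le r \le q^j-1$ and binomially expanding, every term with $\ell\ge 1$ carries a factor $\binom{n}{\ell}(q^j)^\ell$, and the lemma shows this is divisible by $q^{i+j\ell}$, hence by $q^{i+j}$. (Alternatively one could iterate Corollary~\ref{power-congruency-corollary} $m$ times.) Summing this congruence over $k=1,\dots,q^j$ then gives
$$(g^n-1)\,S_n(q^j) \equiv -q^{jn} \pmod{q^{i+j}}.$$

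To conclude, I would note two elementary points. First, $g^n-1$ is a unit modulo $q^{i+j}$: because $g$ is a primitive root modulo $q$ and $q-1 \notdivides n$, we have $g^n \not\equiv 1 \pmod q$, so $\gcd(g^n-1,q)=1$. Second, $q^{jn}\equiv 0 \pmod{q^{i+j}}$: the hypothesis $q^i \divides n$ together with $q\ge 3$ yields $n \ge q^i \ge i+1$, whence $jn \ge j(i+1) \ge i+j$. Multiplying the displayed congruence through by the inverse of $g^n-1$ then gives $S_n(q^j)\equiv 0 \pmod{q^{i+j}}$. The only delicate step is the displayed power congruence $(gk)^n \equiv (gk\bmod q^j)^n \pmod{q^{i+j}}$, but Lemma~\ref{div-by-qpowers-lemma} was set up precisely for this purpose, so everything else is routine bookkeeping.
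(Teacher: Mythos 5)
Your proof is correct, and it takes a genuinely different route from the paper's. The paper fixes $i$ and inducts on $j$: the primitive-root cancellation is used only in the base case $j=1$ (working modulo $q^{i+1}$), and the passage from $q^j$ to $q^{j+1}$ splits $\{1,\dots,q^{j+1}\}$ into $q$ blocks of length $q^j$, expands $(tq^j+r)^n$, and kills the cross term $n\frac{(q-1)q}{2}q^jS_{n-1}(q^j)$ using $q^i\divides n$. You instead run the primitive-root argument directly at the level $q^j$, observing that multiplication by $g$ permutes all of $\Z_{q^j}$ (not merely $\Z_q^*$ as in \eqref{mult-by-g}) and that $(gk)^n\equiv(gk\bmod q^j)^n\pmod{q^{i+j}}$ by Lemma \ref{div-by-qpowers-lemma}; this eliminates the induction entirely, at the modest cost of tracking the stray term $q^{jn}$, which you correctly show vanishes modulo $q^{i+j}$ because $q^i\divides n$ forces $n\ge q^i\ge i+1$ and hence $jn\ge j(i+1)\ge i+j$. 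A bonus of your version is that it treats $i=0$ uniformly, so it gives an independent proof of the $q-1\notdivides n$ case of Theorem \ref{generalized-congruency-theorem} (2) rather than quoting it, as the paper does for its $i=0$ case. Two pedantic points for a final write-up: Lemma \ref{div-by-qpowers-lemma} is stated for $i\in\N$, so for $i=0$ you should remark that the needed divisibility $q^{j}\divides\binom{n}{\ell}(q^j)^\ell$ for $\ell\ge1$ is immediate from the factor $(q^j)^\ell$ alone; and the evaluation $\sum_{k=1}^{q^j}(gk\bmod q^j)^n=\sum_{r=0}^{q^j-1}r^n=S_n(q^j)-q^{jn}$ silently uses $0^n=0$, which is fine since $n\ge1$.
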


\begin{proof}
The case $i=0$ is Theorem \ref{generalized-congruency-theorem} (2). 

For any fixed $i\ge 1$ we use induction on $j$. First consider $j=1$. Let $g$ be a generator of the multiplicative group $\Z_{q}^*$. 
Then 
\begin{align*}
g^nS_n(q) &\equiv g^n \sum_{k=1}^{q}k^n \\
&\equiv \sum_{k=1}^{q} (gk)^n \\
&\equiv \sum_{k=1}^{q} (gk\bmod{q})^n \qquad \text{(by Corollary \ref{power-congruency-corollary})}\\
&\equiv \sum_{k=1}^{q}k^n \qquad \text{(by \eqref{mult-by-g})}\\ 
&\equiv S_n(q) \pmod{q^{i+1}}.
\end{align*}
Thus $$(g^n-1)S_n(q)\equiv0\pmod{q^{i+1}}.$$
But $g^n\not\equiv 1$ (mod $q$) since $g$ is a generator of $\Z_{q}^*$ and $q-1\notdivides n$. Therefore $$S_n(q)\equiv 0\pmod{q^{i+1}}.$$
Now assume that $S_n(q^j)\equiv 0$ (mod $q^{i+j}$) for some $j\ge1$. Then 
\begin{align*}
S_n(q^{j+1}) &\equiv \sum_{t=0}^{q-1}\sum_{r=1}^{q^j}(tq^j+r)^n\\
&\equiv \sum_{t=0}^{q-1}\sum_{r=1}^{q^j} \left( r^p+ntq^jr^{n-1}+\sum_{k=2}^n \binom{n}{k}(tq^j)^k r^{n-k}\right) \\
&\equiv \sum_{t=0}^{q-1}\sum_{r=1}^{q^j} \left( r^n+ntq^jr^{n-1}+0\right) \qquad\text{(by Lemma \ref{div-by-qpowers-lemma})} \\ 
&\equiv \sum_{t=0}^{q-1} \left( S_n(q^j) + ntq^j S_{n-1}(q^j)\right)\\
&\equiv qS_n(q^j) + n\frac{(q-1)q}{2}q^j S_{n-1}(q^j)\\
&\equiv 0\pmod{q^{i+j+1}}. \qquad (\text{since $q^i\divides n$})
\end{align*}
\end{proof}

\section{Periodicity}
\label{periodicity-section}

In this section, we first establish the periodicity of the sequence of sequences $((S_n(m)\bmod k)_{n\geq 1})_{m\geq 1}$ for any $k \in \N$. An immediate implication of this result is that the sequence $(S_n(m)\bmod k)_{m\geq 1}$ is periodic for all values of $k$ and $n$. We then provide formulas for the length of the period when $k$ is a power of a prime.

\begin{theorem}
\label{row-periodicity-theorem}
For each $k\in\N$, the sequence of sequences 
$$( (S_1(m) \bmod{k}, \ S_2(m) \bmod{k}, \ S_3(m) \bmod{k}, \ \dots ))_{m\geq 1}$$ 
is periodic. 
If $k=q_1^{a_1}q_2^{a_2}\cdots q_r^{a_r}$ where $q_i$'s are distinct primes, then the period is $q_1^{a_1+1}q_2^{a_2+1}\cdots q_r^{a_r+1}$.
\end{theorem}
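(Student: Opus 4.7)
The plan is to first characterize which positive integers $T$ are periods, then apply the Chinese Remainder Theorem (CRT) to reduce to prime-power moduli, where the results of Section \ref{generalization-section} apply directly. The main subtlety lies in the characterization itself: a priori periodicity involves both parameters $m$ and $n$, and it is not immediately obvious that the condition collapses to a clean one-parameter statement.

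\textbf{Step 1 (Characterization).} I will show $T$ is a period if and only if (i) $k \divides T$ and (ii) $S_n(T) \equiv 0 \pmod k$ for every $n \geq 1$. Set $f_n(m) := S_n(m+T) - S_n(m)$; $T$ is a period exactly when $f_n(m) \equiv 0 \pmod k$ for all $m,n\geq 1$, which is equivalent to both the base value $f_n(1)$ and every consecutive difference $f_n(m+1) - f_n(m) = (m+T+1)^n - (m+1)^n$ vanishing modulo $k$. The differences vanish for all $m,n\geq 1$ iff $k \divides T$ (necessity via $n=1$; sufficiency is immediate). Given $k \divides T$, $(T+1)^n \equiv 1 \pmod k$, so $f_n(1) = S_n(T+1) - 1 \equiv S_n(T) \pmod k$, yielding (ii).

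\textbf{Step 2 (Upper bound).} Let $T^{\ast} := q_1^{a_1+1} \cdots q_r^{a_r+1}$; clearly $k \divides T^{\ast}$. For (ii), by CRT it suffices to verify $S_n(T^{\ast}) \equiv 0 \pmod{q_i^{a_i}}$ for each $i$. Writing $T^{\ast} = L_i\, q_i^{a_i}$ with $L_i := q_i \prod_{j \neq i} q_j^{a_j+1}$, I split $S_n(T^{\ast})$ into $L_i$ blocks of $q_i^{a_i}$ consecutive integers; since $(s q_i^{a_i} + t)^n \equiv t^n \pmod{q_i^{a_i}}$, each block contributes $S_n(q_i^{a_i})$, giving $S_n(T^{\ast}) \equiv L_i\, S_n(q_i^{a_i}) \pmod{q_i^{a_i}}$. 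Because $q_i \divides L_i$ and, by Corollary \ref{generalized-congruency-corollary}, $q_i^{a_i - 1} \divides S_n(q_i^{a_i})$, the product is divisible by $q_i^{a_i}$.

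\textbf{Step 3 (Lower bound).} Standard reasoning shows the set of periods equals the positive multiples of a unique minimum period $T_{\min}$, so by CRT applied to the characterization in Step 1, $T_{\min}$ is the $\text{lcm}$ of the minimum periods modulo each prime power $q_i^{a_i}$. Since the $q_i$ are distinct primes, it suffices to show the minimum period modulo an individual prime power $q^a$ equals $q^{a+1}$. Any period has the form $T = \ell q^a$, and the block decomposition of Step 2 gives $S_n(\ell q^a) \equiv \ell\, S_n(q^a) \pmod{q^a}$. Taking $n = q-1$, Theorem \ref{generalized-congruency-theorem} (or Theorem \ref{known-congruency-theorem} when $q = 2$, $a = 1$) yields $S_n(q^a) \equiv \varphi(q^a) = q^{a-1}(q-1) \pmod{q^a}$, which is nonzero modulo $q^a$ since $\gcd(q-1, q) = 1$. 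Hence (ii) forces $q \divides \ell$, so $q^{a+1} \divides T$; this bound is attained by Step 2, completing the proof.
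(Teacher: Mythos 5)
Your proof is correct, and while it rests on the same essential ingredients as the paper's --- the block decomposition $S_n(\ell q^a)\equiv \ell\, S_n(q^a)\pmod{q^a}$, Corollary \ref{generalized-congruency-corollary} for the upper bound, the witness $n=q-1$ with $S_{q-1}(q^a)\equiv\varphi(q^a)\pmod{q^a}$ for the lower bound, and the Chinese Remainder Theorem to assemble prime powers --- it is organized along a genuinely different line. The paper proves directly that $q^{a+1}$ is a period modulo $q^a$, notes that the minimal period must therefore be a power of $q$ dividing $q^{a+1}$, and rules out anything smaller by exhibiting $S_{q-1}(q^a)\not\equiv S_{q-1}(q^{a+1})\pmod{q^a}$; its lower bound thus depends on the upper bound already being in place. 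You instead begin by characterizing the entire set of periods: $T$ is a period modulo $k$ if and only if $k\divides T$ and $S_n(T)\equiv 0\pmod{k}$ for all $n$, obtained by differencing in $m$. This collapses the two-parameter periodicity condition to a one-parameter divisibility condition and buys two things. First, the lower bound becomes an unconditional statement that \emph{every} period is a multiple of $q^{a+1}$ (the congruence $\ell\,\varphi(q^a)\equiv 0\pmod{q^a}$ forces $q\divides\ell$), independent of the upper bound. Second, the CRT step --- which the paper states rather tersely --- is made airtight, since your characterization visibly factors into the prime-power conditions and the minimal period is then the least common multiple of the prime-power periods, the standard fact that the set of periods consists of the multiples of the minimal one doing the rest. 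The only cost is the modest bookkeeping of your Step 1.
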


\begin{proof} 
We will first prove that
$$S_n(m+q^{a+1}) \equiv S_n(m) \pmod{q^a}$$ 
for all prime $q$ and natural $n$, $m$, and $a$.  
We have 
\begin{align*}
S_n(m+q^{a+1}) & \equiv S_n(q^{a+1}) + (q^{a+1}+1)^n + \dots + (q^{a+1}+m)^n \\
& \equiv S_n(q^{a+1}) + S_n(m) \\ 
& \equiv S_n(m) \pmod{q^a}  
\end{align*}
since $S_n(q^{a+1})\equiv 0$ (mod $q^a$) by Corollary \ref{generalized-congruency-corollary}.\\
Thus, the sequence of sequences 
$$((S_1(m)\bmod{q^a}, \ S_2(m)\bmod{q^a}, \ S_3(m)\bmod{q^a}, \ \dots ))_{m\geq 1}$$ 
repeats every $q^{a+1}$ terms. Thus it is periodic with period being a factor of $q^{a+1}$. To show that the period is not less than $q^{a+1}$, it is sufficient to show that the sequence does not repeat every $q^a$ terms. More precisely, we will show that $S_n(q^a) \not\equiv S_n(q^{a+1})$ (mod $q^a$) for at least one value of $n$.

Consider $n=q-1$ (or, in fact, any $n$ divisible by $q-1$ if $q$ is odd). By Theorem \ref{known-congruency-theorem} in the case $a=1$, and by Theorem \ref{generalized-congruency-theorem} otherwise, and using Corollary \ref{generalized-congruency-corollary},  
$$S_n(q^a)\equiv \varphi(q^a) \not\equiv 0 \equiv S_n(q^{a+1})\pmod{q^a}.$$    
Thus the sequence does not repeat every $q^a$ terms, which implies the period is exactly $q^{a+1}$.\\
Next, if $k=q_1^{a_1}q_2^{a_2}\cdots q_r^{a_r}$ where $q_i$'s are distinct primes, then from the case proved above and the Chinese Remainder Theorem, it follows that the period of the sequence is $q_1^{a_1+1}q_2^{a_2+1}\cdots q_r^{a_r+1}$.
\end{proof}

It follows from Theorem \ref{row-periodicity-theorem} that given any values of $k$ and $n$, the sequence 
$$(S_n(m)\bmod{k})_{m\geq 1}$$ 
is periodic with period not exceeding the one given in Theorem \ref{row-periodicity-theorem}. 
However, for some values of $k$ and $n$ the period is smaller.
\begin{theorem}
\label{vertical-periodicity-theorem}
For $k,n\in \N$, let $\ell(k,n)$ denote the period of the sequence $(S_n(m)\bmod{k})_{m\geq 1}$. Then   
\begin{enumerate}
\item[(1)] $\ell(2,n)=4$ for all $n$.   

\item[(2)] for $a\ge 2$, 

$$\ell(2^a, n)=\begin{cases}
2^{a+1}, & \text{ if } n=1 \text{ or } 2 \divides n; \\
2^a, & \text{ otherwise}. 
\end{cases}$$

\item[(3)] for $q$ an odd prime and $a\ge 1$, 
$$\ell(q^a,n) = \begin{cases}
q^{a+1}, & \text{ if } q-1 \divides n; \\
q^{a-i}, & \text{ if } q-1\notdivides n, \ \nu_q(n) = i, \ 0 \le i \le a-2; \\
q, & \text{ if } q-1\notdivides n \text{ and } q^{a-1} \divides n, 
\end{cases}$$
where $\nu_q(n)$ is the exponent of the highest power of $q$ that divides $n$.
\end{enumerate}
\end{theorem}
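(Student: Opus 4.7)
The plan is to verify, for each case, that the claimed value $\ell^*$ is a period of $(S_n(m)\bmod k)_{m\ge 1}$ and that no strict divisor of $\ell^*$ is. A convenient reformulation is that $\ell$ is a period if and only if (a) $(m+\ell)^n\equiv m^n\pmod{k}$ for every $m\ge 1$, and (b) $S_n(\ell)\equiv 0\pmod{k}$; the forward direction follows by examining consecutive differences of $S_n$-values and by summing one full block, and the reverse is immediate. Theorem~\ref{row-periodicity-theorem} already forces the period to divide $2^{a+1}$ or $q^{a+1}$, so in every case we need only decide among finitely many candidate divisors.

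\textbf{Parts (1) and (2).} For Part (1), $m^n\bmod 2=1,0,1,0,\dots$ for any $n\ge 1$, and accumulating gives $S_n(m)\bmod 2=1,1,0,0,\dots$, of period $4$. Part (2) splits on the parity of $n$. If $n=1$ or $n$ is even, Theorem~\ref{generalized-congruency-theorem}(1) gives $S_n(2^a)\equiv\varphi(2^a)=2^{a-1}\not\equiv 0\pmod{2^a}$, so (b) fails at $\ell=2^a$ and the period must be the full $2^{a+1}$. If $n>1$ is odd, the same theorem yields (b) at $\ell=2^a$, while (a) there is immediate from the binomial theorem; to rule out $\ell=2^{a-1}$, expand modulo $2^a$: for $a\ge 2$ the degree-$\ge 2$ terms in $(1+2^{a-1})^n$ carry a factor $2^{2(a-1)}\ge 2^a$ and so vanish, leaving $(1+2^{a-1})^n\equiv 1+n\cdot 2^{a-1}\pmod{2^a}$, which is nontrivial because $n$ is odd.

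\textbf{Part (3).} The three sub-cases run on the same template, with Lemma~\ref{div-by-qpowers-lemma} and Theorem~\ref{generator-block-theorem} taking the place of the ad hoc binomial estimates. If $q-1\divides n$, Theorem~\ref{generalized-congruency-theorem}(2) gives $S_n(q^a)\equiv\varphi(q^a)=(q-1)q^{a-1}\not\equiv 0\pmod{q^a}$, so $q^a$ fails (b) and the period is $q^{a+1}$. If $q-1\notdivides n$ and $\nu_q(n)=i\le a-2$, then Lemma~\ref{div-by-qpowers-lemma} with $j=a-i$ shows every binomial term in $(m+q^{a-i})^n-m^n$ is divisible by $q^{i+(a-i)}=q^a$, and Theorem~\ref{generator-block-theorem} supplies $S_n(q^{a-i})\equiv 0\pmod{q^a}$; to exclude $\ell=q^{a-i-1}$, the same lemma sends the degree-$\ge 2$ terms of $(1+q^{a-i-1})^n-1$ into $q^a\Z$, leaving $n\cdot q^{a-i-1}$, a unit multiple of $q^{a-1}$, which is nonzero modulo $q^a$. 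Finally, if $q^{a-1}\divides n$, the $j=1$ version of Lemma~\ref{div-by-qpowers-lemma} and Theorem~\ref{generator-block-theorem} yield (a) and (b) at $\ell=q$, and the difference $S_n(2)-S_n(1)=2^n$, a unit mod $q^a$, rules out $\ell=1$.

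\textbf{Main obstacle.} Given Theorems~\ref{generalized-congruency-theorem} and~\ref{generator-block-theorem}, the upper-bound portion of each case is routine; the substantive work lies in ruling out the next-smaller divisor. In every such step, the argument reduces to isolating the linear-in-$\ell$ term of $(m+\ell)^n-m^n$ modulo $k$ and certifying that it does not vanish, and it is precisely Lemma~\ref{div-by-qpowers-lemma}'s extra factor of $q$ on the degree-$\ge 2$ terms that makes this isolation possible.
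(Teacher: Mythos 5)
Your proposal is correct and follows essentially the same route as the paper: upper bounds on the period come from Theorem~\ref{generalized-congruency-theorem}, Theorem~\ref{generator-block-theorem}, and Lemma~\ref{div-by-qpowers-lemma} applied to a block of length $\ell^*$, and the next-smaller divisor is excluded by isolating the linear term $n\ell'$ in $(1+\ell')^n$ modulo the prime power, exactly as in the paper's computation of $S_n(\ell'+1)-S_n(1)$. Your ``(a) and (b)'' characterization of periods is just a clean packaging of what the paper does case by case (the only point deserving a line of justification is the forward direction at $m=1$, e.g.\ via $1^n\equiv(1+k\ell)^n\equiv\cdots\equiv(1+\ell)^n$, or via the paper's device of first deducing $S_n(\ell)\equiv S_n(2\ell)$), and all remaining details match.
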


\begin{proof}
\begin{enumerate}
\item[(1)] 
Theorem \ref{row-periodicity-theorem} implies that $\ell(2,n)$ is a factor of $4$. Since
$$\begin{array}{r@{\ }l}
1^n & \equiv 1 \pmod 2,\\
1^n+2^n & \equiv 1 \pmod 2,\\
1^n+2^n+3^n & \equiv 0 \pmod 2,\\
1^n+2^n+3^n+4^n & \equiv 0 \pmod 2,
\end{array}$$
$\ell(2,n)=4$.
\item[(2)]
Let $a \ge 2$.

If $n = 1$ or $2 \divides n$, by Theorem \ref{generalized-congruency-theorem} (1) we have $S_n(2^a) \equiv \varphi(2^a)$ (mod $2^a$). However, Theorem \ref{row-periodicity-theorem} implies that $\ell(2^a,n)$ is a factor of $2^{a+1}$, and hence must be $2^{a+1}$.

If $n > 1$ and $2 \notdivides n$, Theorem \ref{generalized-congruency-theorem} (1) implies that $S_n(2^a) \equiv 0$ (mod $2^a$).

We have 
\begin{align*}
S_n(m+2^{a}) & \equiv S_n(2^{a}) + (2^{a}+1)^n + \dots + (2^{a}+m)^n \\
& \equiv S_n(2^{a}) + S_n(m) \\ 
& \equiv S_n(m)  \pmod{2^a}.  
\end{align*}
Thus, $\ell(2^a,n)$ is a factor of $2^a$. We now show that $\ell(2^a,n)$ is not smaller than $2^a$. Assume to the contrary that $\ell(2^a,n)$ is a factor of $2^{a-1}$. Then $$S_n(2^{a-1})\equiv S_n(2^a)\equiv0\pmod{2^a},$$ but then
\begin{align*}
S_n(2^{a-1}+1) & \equiv S_n(2^{a-1}) + (2^{a-1}+1)^n \\
& \equiv 0 + 1^n + n2^{a-1} + \sum_{k=2}^n \binom{n}{k}(2^{a-1})^k 1^{n-k} \\
& \equiv 1^n + n2^{a-1}\\ 
& \not\equiv 1^n \qquad \text{(since $n$ is odd)} \\
& \equiv S_n(1) \pmod{2^a},
\end{align*}
which is a contradiction. 

\item[(3)] Let $q$ be an odd prime and $a\ge 1$. 

The case $q-1\divides n$ follows from the proof of Theorem \ref{row-periodicity-theorem}. 

If $q-1\notdivides n$ and $q^i\divides n$ for $0\le i\le a-1$, 
then by Theorem \ref{generator-block-theorem} 
$$S_n(q^{a-i})\equiv 0\pmod{q^a}.$$
Then 
\begin{align*}
S_n(m+q^{a-i}) & \equiv S_n(q^{a-i})+\sum_{r=1}^m(q^{a-i}+r)^n \\
& \equiv 0 + \sum_{r=1}^m \left(r^n +\sum_{k=1}^n \binom{n}{k} (q^{a-i})^k r^{n-k}\right)\\
& \equiv \sum_{r=1}^m r^n \qquad \text{(by Lemma \ref{div-by-qpowers-lemma})}\\
& \equiv S_n(m) \pmod{q^a}, 
\end{align*}
so $\ell(q^a,n)$ is a factor of $q^{a-i}$. 

We will show that if $q^{i+1}\notdivides n$ for $0\le i\le a-2$, then $\ell(q^a,n)$ is not smaller than $q^{a-i}$. Assume to the contrary that $\ell(q^a,n)$ is a factor of $q^{a-i-1}$. Then  
$$S_n(q^{a-i-1})\equiv S_n(q^{a-i}) \equiv 0 \pmod{q^a},$$ 
but then 
\begin{align*}
S_n(q^{a-i-1}+1) & \equiv S_n(q^{a-i-1}) + (q^{a-i-1}+1)^n \\
& \equiv 0 + 1^n + nq^{a-i-1} + \sum_{k=2}^n \binom{n}{k}(q^{a-i-1})^k 1^{n-k} \\
& \equiv 1^n + nq^{a-i-1} + 0 \qquad \text{(by Lemma \ref{div-by-qpowers-lemma})} \\ 
& \not\equiv 1^n \qquad \text{(since $q^{i+1}\notdivides n$)} \\
& \equiv S_n(1) \pmod{q^a},
\end{align*}
which is a contradiction. 

Thus we have shown that for $0\le i\le a-2$, if $q-1\notdivides n$, and $\nu_q(n) = i$, then $\ell(q^a,n)$ is a factor of $q^{a-i}$ but not a factor of $q^{a-i-1}$. Therefore, $\ell(q^a,n)=q^{a-i}$. 

In the last case ($q-1\notdivides n$ and $q^{a-1}\divides n$), we have shown above that $\ell(q^a,n)$ is a factor of $q$. However,  
$$S_n(1)\equiv 1 \not\equiv 0 \equiv S_n(q) \pmod{q^a},$$
so $\ell(q^a,n)\neq 1$. Therefore, $\ell(q^a,n)=q$. 
\end{enumerate}
\end{proof}

\begin{section}{Acknowledgment}
The authors would like to thank the College of Science and Mathematics at California State University, Fresno for supporting this work.
\end{section}

\bigskip

\hrule

\bigskip

\noindent 2010 \emph{Mathematics Subject Classification} Primary 11A07; Secondary 11B50, 11A25, 11B83.\\
\emph{Keywords:} number theory, power sum, congruence, periodicity, period, Euler phi-function.

\bigskip 

\hrule 

\bigskip

\noindent (Concerned with sequences \href{http://www.oeis.org/A000010}{A000010}, \href{http://www.oeis.org/A000217}{A000217}, \href{http://www.oeis.org/A026729}{A026729}, \href{http://www.oeis.org/A027641}{A027641}, and \href{http://www.oeis.org/A027642}{A027642}.) 

\end{document}